    \newcommand\lmod[2]{
        \mathchoice
            {
                \text{\raise1ex\hbox{$#1$}\Big/\lower1ex\hbox{$#2$}}%
            }
            {
                #1\,/\,#2
            }
            {
                #1\,/\,#2
            }
            {
                #1\,/\,#2
            }
    }
\newtheorem{defn}{Definition}[section]
\newtheorem{thm}[defn]{Theorem}
\newtheorem{lem}[defn]{Lemma}  
\newtheorem{prop}[defn]{Proposition}
\newtheorem{cor}[defn]{Corollary}
\theoremstyle{definition}
\theoremstyle{definition}
\titleformat{\section}{\large\bfseries}{ }{12pt}{}
\title{Explicit Description of Centralizers for a Matrix}
\author{Tianhao Wang}
\date{\today}
\begin{document}
\maketitle
\begin{abstract}
    Let $k$ be a field and $A\in M_n(k)$ be an $n\times n$ matrix. We denote $C_{M_n(k)}(A) = \{B\in M_n(k) : BA = AB\}$ be its centralizers in $M_n(k)$. The dimension of the space of centralizer was already known by Frobenius. This paper will give the explicit $k$-basis for $C_{M_n(k)}(A)$ and also an algorithm (with polynomial complexity respect to multiplication  in the field $k$) to construct the explicit basis. Lastly, the result can be used to solve a weaker version of the Wild Problem.
\end{abstract}
\tableofcontents
\newpage

\section{Notations and Preliminary Results}
Here are some standard results from Algebra that I need to use. Most of them can be found in standard Algebra book.

Let $A\in M_n(k)$ be an $n\times n$ matrix, we denote $V_A$ to be the $k[x]$-module $k^n$ defined via $x\cdot v = Av$. Let $B\in M_m(k)$ be another matrix, we define \[
A\oplus B = \left[
        \begin{tabular}{c|c}
             A & 0  \\ \hline
             0 & B 
        \end{tabular}
    \right] \in M_{n+m}(k)
\]
and it is easy to see that $V_A\oplus V_B \simeq V_{A\oplus B}$.

Let $f(x) = x^n + c_{n-1}x^{n-1} + \cdots + c_0\in k[x]$ be a monic polynomial, we define the \textbf{Companion Matrix} of $f$, denoted as $C(f)$ via 
\[
C(f) = \begin{bmatrix} 0 & 0 & \cdots & 0 & -c_0 \\ 1 & 0 &\cdots & 0 & -c_1 \\ 0 & 1 & \cdots & 0 & -c_2 \\ \vdots & & \ddots &  &\vdots \\ 0 & 0 & \cdots & 1 & -c_{n-1} \end{bmatrix}
\]
We also have $V_{C(f)}\simeq k[x]/fk[x]$ as $k[x]$-module via $e_i\mapsto x^{i-1}$. We are going to use this $k[x]$-isomorphism explicitly later, so we give it a name as $T$.

\begin{lem}
Let $A,B\in M_n(k)$, if $V_A\simeq V_B$ as $k[x]$-module, then there is $P\in GL_n(k)$ such that $A = PBP^{-1}$.
\end{lem}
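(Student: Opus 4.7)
The plan is to translate the $k[x]$-module isomorphism into a matrix equation by stripping away the module structure down to its underlying $k$-linear map and then separately encoding the $x$-action.

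First I would let $\phi\colon V_A\to V_B$ denote the given $k[x]$-module isomorphism. Since both $V_A$ and $V_B$ have underlying $k$-vector space $k^n$, $\phi$ is in particular a $k$-linear automorphism of $k^n$, hence is represented by some invertible matrix $Q\in GL_n(k)$ in the standard basis, i.e.\ $\phi(v)=Qv$ for all $v\in k^n$.

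Next I would unpack what it means for $\phi$ to be $k[x]$-linear. By definition of $V_A$ and $V_B$, multiplication by $x$ acts as $A$ on $V_A$ and as $B$ on $V_B$. The compatibility $\phi(x\cdot v)=x\cdot\phi(v)$ therefore reads $\phi(Av)=B\phi(v)$, which in matrix form becomes $QAv=BQv$ for every $v\in k^n$, hence $QA=BQ$. Setting $P=Q^{-1}$ yields $A=Q^{-1}BQ=PBP^{-1}$, as desired.

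There is really no obstacle here: the argument is essentially a tautology, since conjugate matrices and isomorphic $k[x]$-module structures on $k^n$ are two languages for the same piece of data. The only thing to keep track of is the direction of the conjugation, i.e.\ whether one uses $Q$ or $Q^{-1}$, and this is fixed by writing out the commutation $\phi\circ A=B\circ\phi$ carefully once.
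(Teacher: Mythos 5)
Your proof is correct and is the standard argument; the paper itself states this lemma without proof as a preliminary fact, and your argument (reading the $k[x]$-linearity of $\phi$ as the matrix identity $QA=BQ$ and inverting) is exactly what is intended, with the direction of conjugation handled correctly.
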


\begin{thm}(\textbf{Rational Canonical Form})

Given $A\in M_n(k)$ and let $V_A$ be the corresponding $k[x]$-module. Then there are unique monic polynomials $f_1,\ldots f_m\in k[x]$ such that $f_1|f_2|\cdots | f_m$ and $$V_A\simeq \lmod{k[x]}{f_1k[x]}\oplus \cdots \oplus \lmod{k[x]}{f_mk[x]}\simeq V_{C(f_1)}\oplus\cdots \oplus V_{C(f_m)}$$
The $f_1,\ldots, f_m$ are called \textbf{invariant factors} of the matrix $A$, and by Lemma 1.1, we know that $A$ is similar to $C(f_1)\oplus \cdots \oplus C(f_m)$ and we call the latter as the \textbf{Rational Canonical Form} of $A$.
\end{thm}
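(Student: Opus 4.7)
The plan is to invoke the structure theorem for finitely generated modules over a principal ideal domain, applied to the $k[x]$-module $V_A$. Since $k[x]$ is a PID and $V_A$ is finitely generated (its $k$-dimension is $n$) and torsion (each $v \in V_A$ has a nonzero annihilator, because the cyclic submodule $k[x]\cdot v$ embeds into the finite-dimensional space $V_A$), the structure theorem yields a decomposition into cyclic torsion summands $k[x]/(f_i)$ with $f_1 \mid f_2 \mid \cdots \mid f_m$. This gives the first isomorphism directly, and uniqueness of the $f_i$ is part of the structure theorem.

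To make the argument concrete, and to prepare for the algorithm alluded to in the abstract, I would proceed via an explicit presentation rather than invoking the structure theorem as a black box. The $k[x]$-linear surjection $\pi: k[x]^n \to V_A$ sending standard basis vectors to standard basis vectors has kernel equal to the $k[x]$-span of the columns of $xI - A$, giving the exact sequence
\[
k[x]^n \xrightarrow{\,xI - A\,} k[x]^n \longrightarrow V_A \longrightarrow 0.
\]
Applying the Smith Normal Form algorithm over the Euclidean domain $k[x]$ produces $P, Q \in GL_n(k[x])$ with $P(xI-A)Q = \operatorname{diag}(d_1, \ldots, d_n)$ satisfying $d_1 \mid \cdots \mid d_n$. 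Discarding the unit entries and normalizing the remainder to be monic yields the invariant factors $f_1 \mid \cdots \mid f_m$, and hence $V_A \cong \bigoplus_i k[x]/(f_i)$. The second isomorphism in the theorem is then immediate from the previously noted $V_{C(f)} \cong k[x]/(f)$ together with $V_{A} \oplus V_{B} \cong V_{A \oplus B}$.

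For uniqueness I would argue via invariants of $xI - A$: the product $d_1 d_2 \cdots d_i$ equals, up to a unit, the greatest common divisor of the $i\times i$ minors of $xI - A$, and this gcd depends only on the equivalence class of the matrix under left and right multiplication by elements of $GL_n(k[x])$. Since similar matrices yield equivalent presentation matrices, the invariant factors depend only on the similarity class of $A$.

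The main obstacle is verifying that $\ker \pi$ equals, and not merely contains, the column span of $xI - A$. Containment is clear since $\pi(xe_i - \sum_j A_{ji} e_j) = Ae_i - Ae_i = 0$. For the reverse inclusion, a degree-reduction argument is required: any element of $\ker \pi$ can, by subtracting suitable $k[x]$-combinations of columns of $xI - A$, be reduced modulo these relations to a tuple of scalars in $k^n \subset k[x]^n$; this reduced tuple is mapped to itself by $\pi$ and therefore must vanish. The remaining ingredient, the existence and essential uniqueness of Smith Normal Form over $k[x]$, is standard Euclidean-domain linear algebra, and I would cite it rather than reprove it.
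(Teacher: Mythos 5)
Your proposal is correct: the paper states this theorem without proof as a standard preliminary, but the concrete route you take --- presenting $V_A$ by the exact sequence $k[x]^n \xrightarrow{xI-A} k[x]^n \to V_A \to 0$, computing the Smith Normal Form over $k[x]$, and closing the gap by showing $\ker\pi$ equals the column span of $xI-A$ --- is exactly the argument the paper itself carries out later in its Theorem 4.3. The ``main obstacle'' you identify (the reverse inclusion for the kernel) is the same step the paper handles there with an explicit inductive construction of the coefficients $v_i'$, so your proof is essentially the same approach as the paper's own treatment.
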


\begin{lem}
Suppose $A = PBP^{-1}$, then $C_{M_n(k)}(A) = PC_{M_n(k)}(B)P^{-1}$.
\end{lem}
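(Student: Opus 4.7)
The plan is to view this as a transport-of-structure statement: conjugation by the fixed invertible matrix $P$ is a $k$-algebra automorphism of $M_n(k)$, and any such automorphism must carry the centralizer of an element to the centralizer of its image. So I expect the proof to reduce to a direct manipulation of the commutation relation, with no real content beyond left- and right-multiplying by $P$ and $P^{-1}$.

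Concretely, I would prove the two inclusions separately. For $C_{M_n(k)}(A)\subseteq PC_{M_n(k)}(B)P^{-1}$, I would start with an arbitrary $X\in C_{M_n(k)}(A)$, substitute $A=PBP^{-1}$ into $XA=AX$, and then left-multiply by $P^{-1}$ and right-multiply by $P$. The $P$'s and $P^{-1}$'s cancel to leave $(P^{-1}XP)B=B(P^{-1}XP)$, which places $P^{-1}XP$ in $C_{M_n(k)}(B)$ and hence $X=P(P^{-1}XP)P^{-1}$ in $PC_{M_n(k)}(B)P^{-1}$. For the reverse inclusion, given $Y\in C_{M_n(k)}(B)$, I would compute $(PYP^{-1})A=PYP^{-1}PBP^{-1}=PYBP^{-1}=PBYP^{-1}=PBP^{-1}PYP^{-1}=A(PYP^{-1})$ directly to see that $PYP^{-1}\in C_{M_n(k)}(A)$.

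There is genuinely no hard step; the entire argument is bookkeeping with associativity and $P^{-1}P=I$. I therefore expect this to be a short paragraph serving as the formal justification for later replacing $A$ with its rational canonical form when computing centralizers, since the lemma says doing so only conjugates the answer by a known change-of-basis matrix.
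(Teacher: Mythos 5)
Your proof is correct and is essentially the same argument as the paper's, which runs the identical computation as a single chain of equivalences ($CA=AC$ iff $(P^{-1}CP)B=B(P^{-1}CP)$) rather than as two separate inclusions. No substantive difference.
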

\begin{proof}
$C\in C_{M_n(k)}(A)$ iff $CA = AC$ iff $CPBP^{-1} = PBP^{-1}C$ iff $(P^{-1}CP)B = B(P^{-1}CP)$ iff $P^{-1}CP\in C_{M_n(k)}(B)$ iff $C\in PC_{M_n(k)}(B)P^{-1}$
\end{proof}

We will first construct centralizer for $C(f_1)\oplus\cdots\oplus C(f_m)$ in the third section. Then, I will give an algorithm to find the all the invariant factors and transformation matrix $P$ in the fourth section. This will give full description of $C_{M_n(k)}(A)$.

\newpage
\section{Dimension of $C_{M_n(k)}(A)$ as $k$-vector space}
\begin{lem}
There is an bijection between 
$$C_{M_n(k)}(A) \longleftrightarrow \operatorname{End}_{k[x]}(V_A)$$
\end{lem}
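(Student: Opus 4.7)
The plan is to observe that a matrix $B\in M_n(k)$ is the same data as a $k$-linear endomorphism of $k^n$, and the condition $BA=AB$ translates exactly into compatibility with the $k[x]$-action on $V_A$. So the bijection will essentially be the tautological identification between matrices and linear maps, restricted on both sides by the same commutation condition.

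Concretely, I would define $\Phi : C_{M_n(k)}(A)\to\operatorname{End}_{k[x]}(V_A)$ by sending $B$ to the map $\varphi_B : V_A\to V_A$, $v\mapsto Bv$. Since $B\in M_n(k)$, the map $\varphi_B$ is automatically $k$-linear, so the only thing to check for well-definedness is $k[x]$-linearity, which amounts to $\varphi_B(x\cdot v) = x\cdot \varphi_B(v)$ for all $v\in k^n$. By definition of $V_A$ this is $B(Av)=A(Bv)$, i.e.\ $BA=AB$, which holds by hypothesis.

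For the inverse $\Psi : \operatorname{End}_{k[x]}(V_A)\to C_{M_n(k)}(A)$, take $\varphi\in\operatorname{End}_{k[x]}(V_A)$; since $\varphi$ is in particular $k$-linear on $k^n$, it is represented by a unique matrix $B_\varphi\in M_n(k)$ in the standard basis. The $k[x]$-linearity of $\varphi$ forces $B_\varphi A = A B_\varphi$ by the same calculation as above, so $B_\varphi\in C_{M_n(k)}(A)$. Finally $\Phi$ and $\Psi$ are mutually inverse because both compositions reduce to the identification between $k$-linear endomorphisms of $k^n$ and elements of $M_n(k)$.

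There is no real obstacle here; the statement is essentially a reformulation of the definition of $V_A$. The only point to state carefully is that \emph{any} $k[x]$-linear map on $V_A$ is automatically $k$-linear and hence comes from a matrix, so the two sets agree as subsets of $\operatorname{End}_k(k^n)$. In fact the bijection is a $k$-algebra isomorphism, which is worth noting since it will be used implicitly when transferring basis computations between matrices and module endomorphisms in the following sections.
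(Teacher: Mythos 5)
Your proposal is correct and follows essentially the same route as the paper: send $B$ to $v\mapsto Bv$, check $k[x]$-linearity is exactly $BA=AB$, and recover $B$ from a $k[x]$-endomorphism via its matrix in the standard basis. Your extra remarks that the two maps are mutually inverse and that the bijection is in fact a $k$-algebra isomorphism are accurate and slightly more complete than the paper's own write-up.
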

\begin{proof}
Given a matrix $B\in C_{M_n(k)}(A)$, we define $\phi: V_A\mapsto V_A$ via $v\mapsto B\cdot v$. Then $\phi(x\cdot v) = \phi (Av) = B\cdot Av = AB\cdot v = x\cdot \phi(v)$. Hence $\phi\in \operatorname{End}_{k[x]}(V_A)$.

Conversely, given $\phi: V_A\mapsto V_A$ an $k[x]$-module homomorphism, it is in particular a $k$-linear map and hence we can find its canonical matrix representation $B\in M_n(k)$ such that $\phi(v) = Bv$. Then we have $\phi(x \cdot v) = x\cdot \phi(v)$ for all $v\in k^n$ which implies $BAv = ABv$ for all $v\in k^n$. Hence $BA=AB$ and $B\in C_{M_n(k)}(A)$.
\end{proof}

We will analyze the structure of $\operatorname{End}_{k[x]}(V_A)$ first. Here are some technical lemmas we need.

\begin{lem}
Let $R$ be a ring and $M_i, N_i$ be $R$-modules. Then
$$\operatorname{Hom}_R\left(\bigoplus_{i=1}^m M_i,\; \bigoplus_{j=1}^n N_j\right)\simeq \bigoplus_{i,j} \operatorname{Hom}_R(M_i, N_j) $$
\end{lem}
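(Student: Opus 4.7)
The plan is to construct explicit mutually-inverse maps in both directions, viewing elements of the Hom set on the left as matrices of component homomorphisms, analogous to block-matrix representations of linear maps. For each $i$, let $\iota_i : M_i \hookrightarrow \bigoplus_{i'=1}^m M_{i'}$ be the canonical inclusion, and for each $j$, let $\pi_j : \bigoplus_{j'=1}^n N_{j'} \twoheadrightarrow N_j$ be the canonical projection. These satisfy $\pi_j \circ \iota_i = \operatorname{id}$ when the indices agree and $0$ otherwise on each summand, together with $\sum_i \iota_i \pi_i = \operatorname{id}$ on $\bigoplus_i M_i$ and $\sum_j \iota_j \pi_j = \operatorname{id}$ on $\bigoplus_j N_j$.

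Define the forward map
\[
\Phi : \operatorname{Hom}_R\!\Big(\bigoplus_{i} M_i,\ \bigoplus_{j} N_j\Big) \longrightarrow \bigoplus_{i,j} \operatorname{Hom}_R(M_i, N_j), \qquad \phi \longmapsto (\pi_j \circ \phi \circ \iota_i)_{i,j}.
\]
Conversely, given a tuple $(\phi_{ij})_{i,j}$, define
\[
\Psi(\phi_{ij}) : \bigoplus_{i} M_i \longrightarrow \bigoplus_{j} N_j, \qquad (m_1,\ldots,m_m) \longmapsto \Big(\textstyle\sum_{i} \phi_{i1}(m_i),\ \ldots,\ \sum_{i} \phi_{in}(m_i)\Big).
\]
Both $\Phi$ and $\Psi$ are manifestly additive, and $R$-linear when the Hom groups inherit an $R$-module structure (e.g.\ when $R$ is commutative, as for $R = k[x]$ in our application).

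To check $\Phi \circ \Psi = \operatorname{id}$, one evaluates $\pi_{j_0} \circ \Psi(\phi_{ij}) \circ \iota_{i_0}$ on an element $m \in M_{i_0}$ and reads off $\phi_{i_0 j_0}(m)$ directly from the defining formula. For $\Psi \circ \Phi = \operatorname{id}$, the key identity is that every $\phi \in \operatorname{Hom}_R(\bigoplus_i M_i, \bigoplus_j N_j)$ admits the decomposition
\[
\phi \;=\; \Big(\sum_{j} \iota_j \pi_j\Big) \circ \phi \circ \Big(\sum_{i} \iota_i \pi_i\Big) \;=\; \sum_{i,j} \iota_j \circ (\pi_j \phi \iota_i) \circ \pi_i,
\]
and the right-hand side is precisely $\Psi(\Phi(\phi))$ unpacked.

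There is no real obstacle here: the content is entirely the universal property of the direct sum, used simultaneously as coproduct (for the domain) and as product (for the codomain), which coincide for finite index sets. The only point demanding care is clean bookkeeping of the double index $(i,j)$ and checking that the finite sums in the definition of $\Psi$ land in the correct summands. Once the inclusion/projection identities are recorded, both verifications are formal.
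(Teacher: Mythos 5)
Your proposal is correct and takes essentially the same approach as the paper: the paper simply writes down the map you call $\Psi$ (sending a tuple $(\phi_{ij})$ to the sum-of-components homomorphism) and asserts it is an isomorphism, while you additionally exhibit its inverse $\phi\mapsto(\pi_j\circ\phi\circ\iota_i)_{i,j}$ and verify the two compositions via the inclusion/projection identities. Your remark that $R$-linearity of the isomorphism requires commutativity of $R$ (harmless here since $R=k[x]$) is a point the paper glosses over, but the substance is the same.
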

\begin{proof}
Here is the explicit isomorphism.
$$\Phi: \bigoplus_{i,j} \operatorname{Hom}_R(M_i, N_j) \mapsto \operatorname{Hom}_R\left(\bigoplus_{i=1}^m M_i,\; \bigoplus_{j=1}^n N_j\right)$$ via
$$\big(\Phi(\phi_{ij})\big)(m_1,\ldots, m_n) = \left(\sum_{i=1}^m \phi_{i1}(m_i)\;,\;\ldots\;,\; \sum_{i=1}^m \phi_{in}(m_i)\right)$$
Checking that this is an $R$-module isomorphism is easy. We are going to use this map explicitly later.
\end{proof}

Based on this lemma, to understand $\operatorname{End}_{k[x]}(V_A)$, it suffice to understand each \[\operatorname{Hom}_{k[x]}\left(\lmod{k[x]}{f_ik[x]}, \lmod{k[x]}{f_jk[x]}\right)\]

\begin{lem}
Let $D$ be a PID and $I=aD, J = bD$ be two ideals of $D$. Then we define $(I : J) = \{x\in D : xJ\subset I\}$ which is an ideal in $D$ containing $I$. Denote $dD = (I: J)$. We will have $d = \operatorname{lcm}(a,b)/b$

Then $$\operatorname{Hom}_D \left(\lmod{D}{J}, \lmod{D}{I}\right) = \{ 1\mapsto \lambda d+I : \lambda\in D\}$$
\end{lem}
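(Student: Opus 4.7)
The plan is to break the statement into two independent pieces: first identify the ideal $(I:J)$, and second classify $D$-module homomorphisms out of the cyclic module $D/J$.

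For the first piece, I would unwind the definition: $(I:J) = \{x \in D : xb \in aD\} = \{x : a \mid xb\}$. Since $D$ is a PID, let $g = \gcd(a,b)$ and write $a = ga'$, $b = gb'$ with $\gcd(a', b') = 1$. Then $a \mid xb$ reduces to $a' \mid xb'$, and by coprimality this is equivalent to $a' \mid x$. Hence $(I:J) = a'D$, and rewriting $a' = a/g = ab/(gb) = \operatorname{lcm}(a,b)/b$ gives the formula $d = \operatorname{lcm}(a,b)/b$.

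For the second piece, I would use the fact that $D/J$ is cyclic, generated by $1+J$, so any $D$-linear map $\phi : D/J \to D/I$ is completely determined by the value $\phi(1 + J) = c + I$ for some $c \in D$. The only constraint is well-definedness: since $b \cdot (1+J) = 0$ in $D/J$, we need $bc \in I$, i.e.\ $c \in (I : J) = dD$. Conversely, for any $c = \lambda d$ with $\lambda \in D$, the assignment $1+J \mapsto \lambda d + I$ extends uniquely to a $D$-linear map because the relation $b(\lambda d) \in aD$ is automatic. This gives the claimed parametrization.

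I do not anticipate a genuine obstacle here; the content is standard cyclic-module yoga in a PID. The one place to be careful is the GCD/LCM identity in the first part, since one must verify that $\operatorname{lcm}(a,b)/b$ is a well-defined element of $D$ (up to units) and agrees with the generator $a' = a/\gcd(a,b)$ that falls out of the coprimality argument. Once that bookkeeping is done, the second part is immediate from the universal property of the quotient $D/J$.
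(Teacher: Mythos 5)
Your proposal is correct and follows essentially the same path as the paper: both identify $(I:J)$ as the ideal generated by $\operatorname{lcm}(a,b)/b$ and then use that $D/J$ is cyclic on $1+J$ so a homomorphism is determined by $\phi(1+J)$, constrained exactly by $J\phi(1+J)\subset I$. The only cosmetic difference is in computing $(I:J)$: you factor out $g=\gcd(a,b)$ and invoke coprimality of $a/g$ and $b/g$, whereas the paper argues by two divisibility checks that $\operatorname{lcm}(a,b)/b$ divides $d$ and conversely; both are standard and equivalent.
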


\begin{proof}
Checking that $(I : J)$ is an ideal containing $I$ is trivial. Since $D$ is principle, we can find $d\in D$ such that $dD = (I : J)$. Since $(I : J)J \subset I$, we have $dDbD\subset aD$ and hence $a|db$, and clearly $b|db$ and hence $\operatorname{lcm}(a,b)/b \,\big|\,d$. It is easy to check that $(\operatorname{lcm}(a,b)/b) J \subset I$ since $\operatorname{lcm}(a,b)/b \cdot \lambda b = \lambda \operatorname{lcm}(a,b)\in aD$. Hence $\operatorname{lcm}(a,b)/b \in dD$ and we have $d = \operatorname{lcm}(a,b)/b$.

Note that $(D/J)$ has dimension $1$ as $D$-module and $1+J\in D/J$ is its generator. Hence $\operatorname{Hom}_D(D/J, D/I)$ depends uniquely on the image of $1+J$. 
Pick $\phi\in \operatorname{Hom}_D(D/J, D/I)$, for all $\lambda\in J$, we have $\phi(0) = \phi(\lambda\cdot (1+J)) = \lambda\cdot\phi(1+J) +I = 0$. Hence $J\phi(1+J)\in I$ and we have $\phi(1+J)\in (I : J)$ and it is easy to check that all $1\mapsto \lambda d + I$ defines an element in $\operatorname{Hom}_D(D/J, D/I)$. 
\end{proof}

\begin{cor}
Pick $f_1,f_2\in k[x]$ be non-zero polynomial such that $f_1|f_2$ and let $q = f_2/f_1$. Then we have
$$\operatorname{Hom}_{k[x]}\left(\lmod{k[x]}{f_1k[x]}, \lmod{k[x]}{f_2k[x]}\right) = \{1\mapsto \lambda q+f_2k[x] : \lambda\in k[x]\}$$

$$\operatorname{Hom}_{k[x]}\left(\lmod{k[x]}{f_2k[x]}, \lmod{k[x]}{f_1k[x]}\right) = \{1\mapsto \lambda + f_1k[x] : \lambda \in k[x]\}$$

Also, both of them as $k$-vector space has dimension $\deg f_1$.
\end{cor}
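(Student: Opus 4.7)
The plan is to apply Lemma 2.3 twice to the PID $D = k[x]$, specializing the generators of the ideals $I$ and $J$ in each direction, and then count cosets to obtain the dimension.

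First I would handle the Hom from $k[x]/f_1 k[x]$ to $k[x]/f_2 k[x]$. I set $J = f_1 k[x]$ and $I = f_2 k[x]$, so that in the notation of Lemma 2.3 we have $a = f_2$ and $b = f_1$. Since $f_1 \mid f_2$ we get $\operatorname{lcm}(f_1, f_2) = f_2$, hence the generator $d = \operatorname{lcm}(a,b)/b = f_2/f_1 = q$. Lemma 2.3 then yields precisely the parametrization $1 \mapsto \lambda q + f_2 k[x]$.

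For the reverse direction I interchange the roles: $J = f_2 k[x]$, $I = f_1 k[x]$, so $a = f_1$, $b = f_2$, and now $d = \operatorname{lcm}(f_1,f_2)/f_2 = f_2/f_2 = 1$. Again Lemma 2.3 gives the form $1 \mapsto \lambda + f_1 k[x]$. For the dimension statement, in each case the map $k[x] \to \operatorname{Hom}$ sending $\lambda$ to the indicated homomorphism is $k$-linear and surjective by construction; I would check when two values $\lambda_1, \lambda_2$ produce the same homomorphism. In the first case, $\lambda_1 q \equiv \lambda_2 q \pmod{f_2 k[x]}$ iff $(\lambda_1-\lambda_2)q \in q f_1 k[x]$ iff $\lambda_1 - \lambda_2 \in f_1 k[x]$. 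In the second case, equality of the homomorphisms is directly $\lambda_1 - \lambda_2 \in f_1 k[x]$. Hence in both cases the Hom space is $k$-linearly isomorphic to $k[x]/f_1 k[x]$, which has dimension $\deg f_1$.

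There is no real obstacle here; the entire statement is a direct specialization of Lemma 2.3 combined with an elementary coset computation. The only mild subtlety is remembering that the parametrization $\lambda \mapsto \lambda q + f_2 k[x]$ in the first case is non-injective precisely modulo $f_1$ (not modulo $f_2$), because the cancellation of the factor $q$ is what makes the dimensions of the two Hom spaces coincide.
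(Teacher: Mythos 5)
Your proposal is correct and follows essentially the same route as the paper: both directions are direct specializations of Lemma 2.3 (with the roles of $I$ and $J$ assigned exactly as you do), followed by a coset count for the dimension. If anything, your kernel computation for the parametrization $\lambda \mapsto \lambda q + f_2k[x]$ is slightly more complete than the paper's argument, which only reduces $\lambda$ modulo $f_1$ to show the spanning set $\{bq + f_2k[x] : \deg b < \deg f_1\}$ suffices without explicitly verifying linear independence.
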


\begin{proof}
The first part is a direct result from the above lemma. For the second part, the $k$-dimension for the first space is $\deg f_1$ since if $\deg \lambda > \deg f_1$, then $\lambda = f_1a+b$ where $a,b\in k[x]$ and $\deg b < \deg f_1$. Then $\lambda q +f_2k[x]= (f_1a+b)\frac{f_2}{f_1} + f_2k[x] = f_2a + bq +f_2k[x] = bq+f_2k[x]$ where $\deg b < \deg f_1$.

The $k$-dimension for the second space is obvious.
\end{proof}

Now, we have a nice formula for the $k$-dimension of the centralizer of a square matrix $A$. This result was also proved by Frobenius.

\begin{prop}
Let $A\in M_n(k)$ and $f_1|\cdots | f_m$ be its invariant factors, then 
$$\dim_k C_{M_n(k)}(A) = \sum_{i=0}^{m-1} (2i+1)\deg f_{m-i} = \deg f_m + 3\deg f_{m-1} + 5\deg f_{m-2}+\cdots$$
\end{prop}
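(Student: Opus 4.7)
The plan is to chain together the results already established in this section: Lemma 2.1 identifies $C_{M_n(k)}(A)$ with $\operatorname{End}_{k[x]}(V_A)$ as $k$-vector spaces, so it suffices to compute $\dim_k \operatorname{End}_{k[x]}(V_A)$. Using the Rational Canonical Form (Theorem 1.2) together with Lemma 2.2, I would decompose
\[
    \operatorname{End}_{k[x]}(V_A) \;\simeq\; \bigoplus_{i,j=1}^{m} \operatorname{Hom}_{k[x]}\!\left(\lmod{k[x]}{f_ik[x]},\, \lmod{k[x]}{f_jk[x]}\right),
\]
reducing the computation to a sum of dimensions of Hom spaces between pairs of cyclic $k[x]$-modules, each of which was handled by Corollary 2.4.

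Next I would apply Corollary 2.4 to each summand. Because the invariant factors satisfy $f_1 \mid f_2 \mid \cdots \mid f_m$, for any pair $(i,j)$ the polynomial $f_{\min(i,j)}$ divides $f_{\max(i,j)}$, and in both directions the corollary gives that the Hom space has $k$-dimension exactly $\deg f_{\min(i,j)}$. Therefore
\[
    \dim_k C_{M_n(k)}(A) \;=\; \sum_{i,j=1}^{m} \deg f_{\min(i,j)}.
\]

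The remaining step is a combinatorial reorganization. For each fixed $\ell \in \{1,\ldots,m\}$, I would count the number of pairs $(i,j)\in\{1,\ldots,m\}^2$ with $\min(i,j)=\ell$: these are the pairs where one coordinate equals $\ell$ and the other is at least $\ell$, giving $2(m-\ell)+1$ pairs after subtracting the double-counted diagonal pair $(\ell,\ell)$. Grouping the sum by the value of $\min(i,j)$ yields
\[
    \dim_k C_{M_n(k)}(A) \;=\; \sum_{\ell=1}^{m} \bigl(2(m-\ell)+1\bigr)\deg f_{\ell},
\]
and the substitution $i = m-\ell$ rewrites this as $\sum_{i=0}^{m-1}(2i+1)\deg f_{m-i}$, matching the claimed formula.

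There is no real obstacle here: every nontrivial ingredient has already been proved. The only subtlety worth stating carefully is the counting of pairs with fixed minimum, since it is easy to either double-count the diagonal or drop a boundary term; writing it as ``one coordinate equals $\ell$, the other is $\geq \ell$, minus the diagonal'' keeps this transparent.
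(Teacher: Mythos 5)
Your proposal is correct and follows essentially the same route as the paper: reduce to $\operatorname{End}_{k[x]}(V_A)$ via Lemma 2.1, decompose with Lemma 2.2 and the Rational Canonical Form, apply Corollary 2.4 to each summand, and then regroup the double sum. Your bookkeeping by the index $\ell=\min(i,j)$ is in fact slightly cleaner than the paper's phrasing (which speaks of factors of ``degree greater than $\deg f_{m-k}$'' and could be misread when consecutive invariant factors have equal degree), but the argument is the same.
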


\begin{proof}
By Lemma 2.1, we have $\dim_k C_{M_n(k)}(A) = \dim_k \operatorname{End}_{k[x]}(V_A)$. Note that 
$$V_A\simeq \lmod{k[x]}{f_1k[x]}\oplus \cdots \oplus \lmod{k[x]}{f_mk[x]}$$
Using Lemma 2.2 and Corollary 2.4, we have
\begin{align*}
    \dim_k C_{M_n(k)}(A) &= \dim_k \left (\bigoplus_{i,j} \operatorname{Hom}_{k[x]}\left(\lmod{k[x]}{f_ik[x]}, \lmod{k[x]}{f_jk[x]}\right)\right) \\
    &= \sum_{i,j} \dim_k \operatorname{Hom}_{k[x]}\left(\lmod{k[x]}{f_ik[x]}, \lmod{k[x]}{f_jk[x]}\right)\\
    &= \sum_{i,j} \min({\deg f_i, \deg f_j})
\end{align*}
For invariant factor $f_{m-k}$, we have $k$ number of $f_j$ having $\deg$ greater than $\deg f_{m-k}$ which contributes $2k \deg f_{m-k}$ and $i=j=m-k$ contributes another $\deg f_{m-k}$. Then the formula follows.
\end{proof}

\begin{cor}
Let $A\in M_n(k)$. If $A$ has only $1$ invariant factor (When the characteristic polynomial $f_A$ equals the minimal polynomial $f_m$), then 
$$C_{M_n(k)}(A) = \{\lambda(A) : \lambda(x)\in k[x]\}$$
\end{cor}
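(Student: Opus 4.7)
The plan is to prove the corollary by a dimension count. The inclusion $\{\lambda(A) : \lambda \in k[x]\} \subseteq C_{M_n(k)}(A)$ is immediate since $A$ commutes with every polynomial in itself, so it suffices to show that both sides have the same $k$-dimension, namely $n$.

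First I would apply Proposition 2.5 in the special case $m=1$, where the only invariant factor is $f_1 = f_m$ of degree $n$ (equal to the characteristic polynomial, and thus to the minimal polynomial). The formula collapses to the single term $i=0$, yielding $\dim_k C_{M_n(k)}(A) = (2\cdot 0 + 1)\deg f_m = n$.

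Next I would bound $\dim_k\{\lambda(A) : \lambda \in k[x]\}$ from below by $n$. Since $f$ is both the characteristic and minimal polynomial of $A$, no nonzero polynomial of degree less than $n$ annihilates $A$, so $I, A, A^2, \ldots, A^{n-1}$ are $k$-linearly independent. On the other hand, by the Cayley--Hamilton theorem (or directly from the fact that the minimal polynomial has degree $n$), every power $A^j$ with $j \geq n$ lies in the $k$-span of $\{I, A, \ldots, A^{n-1}\}$, so $\{\lambda(A) : \lambda \in k[x]\}$ has dimension exactly $n$.

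The only subtlety is ensuring the two dimensions match, which is why I set up the count via Proposition 2.5 rather than invoking Lemma 2.1 directly; I do not anticipate a genuine obstacle here. Combining the inclusion with the equality of dimensions gives $C_{M_n(k)}(A) = \{\lambda(A) : \lambda \in k[x]\}$, as desired.
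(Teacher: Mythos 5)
Your proposal is correct and follows essentially the same route as the paper: establish the easy inclusion, compute $\dim_k C_{M_n(k)}(A) = \deg f_m = n$ from Proposition 2.5 with a single invariant factor, and match it against the dimension of $\{\lambda(A)\}$. You simply spell out the linear-independence and spanning argument for $I, A, \ldots, A^{n-1}$ that the paper leaves as "easy to check."
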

\begin{proof}
$A$ clearly commutes with $\lambda(A)$ for all $\lambda(x)\in k[x]$. It is easy to check that the latter is a $k$-vector space with dimension $\deg f_m$. By previous proposition, we have $\dim_k C_{M_n(k)}(A) = f_m$ and hence they must be equal.
\end{proof}

\newpage
\section{Explicit Description for Centralizers of $C(f_1)\oplus \cdots \oplus C(f_m)$}

Suppose $f_1|\cdots|f_m\in k[x]$ be monic and non-zero polynomials and let $A = C(f_1)\oplus\cdots\oplus C(f_m)$. Denote $n_i = \deg f_i$ and $T_i: V_{C(f_i)}\mapsto k[x]/f_ik[x]$ be the canonical $k[x]$-module isomorphism via $e_j\mapsto x^{j-1}+f_ik[x]$. We define the \textbf{generating polynomial} $q_{ij}(x)\in k[x]$ via 

\[
q_{ij}(x) = \begin{cases}
1 \;\;\;\;\;\;\;\;\;\;\text{ for }i\leq j \\
f_i/f_j \;\;\;\;\text{ for } i>j
\end{cases}
\]

With a little bit ambiguity, we use to same notation to denote the \textbf{generating vector} $q_{ij} = {T_i}^{-1}(q_{ij}(x))\in k^{n_i}$

More explicitly, when $i>j$, suppose $q_{ij}(x) = f_i/f_j =  c_0+c_1x+\cdots x^{n_i-n_j}\in k[x]$, then we will have $q_{ij} = (c_0, c_1,\ldots, c_{n_i-n_j-1},0,\ldots,0)\in k^{n_i}$. When $i\leq j$, we will have $q_{ij} = e_1\in k^{n_i}$.

Then we define the \textbf{generating matrix} $Q_{ij}\in M_{n_i,n_j}(k)$ where 
$$Q_{ij} = 
\left[\begin{array}{c|c|c|c}
q_{ij} & C(f_i)q_{ij} &C(f_i)^2q_{ij} & \cdots
\end{array}\right]
$$

In particular, $Q_{ii} = I_{n_i}$.

\begin{thm} (\textbf{Centralizer for Matrix in Rational Canonical Form}) \par
Let $A = C(f_1)\oplus\cdots\oplus C(f_m)$, then all elements in $C_{M_n(k)}(A)$ has the following decomposition as block matrix 
\[
    \left[
    \begin{array}{c|c|c|c}
        C_{11} & C_{12} & \cdots & C_{1m} \\ \hline
        C_{21} & C_{22} & \cdots & C_{2m} \\ \hline
        \vdots & & \ddots & \vdots \\ \hline
        C_{m1} & C_{m2} & \cdots & C_{mm} 
    \end{array}
    \right]
\]

Where $C_{ij} \in M_{n_i, n_j}(k)$ and $C_{ij}$ depends on choice of $\phi_{ij}\in \operatorname{Hom}_{k[x]}(k[x]/f_j, k[x]/f_i)$. 

More explicitly, we have $$C_{ij} \in \{ \lambda(C(f_i))Q_{ij}: \lambda(x)\in k[x]\} = \operatorname{span}_k\{I, C(f_i),\ldots, C(f_i)^{n_i-1}\}Q_{ij}$$

Note that the right hand side is not $k$-linear independent when $i > j$. To make it linear independent, we pick $\operatorname{span}_k\{I,C(f_i),\ldots, C(f_i)^{n_j-1}\}Q_{ij}$ instead when $i>j$.

The explicit basis for $C_{M_n(k)}(A)$ is given by picking $C_{ij}$ from above and other blocks being $0$.

\end{thm}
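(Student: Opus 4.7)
The plan is to chain together the isomorphisms already in place. By Lemma 2.1, $C_{M_n(k)}(A) \simeq \operatorname{End}_{k[x]}(V_A)$; by Lemma 2.2, this endomorphism ring decomposes as $\bigoplus_{i,j} \operatorname{Hom}_{k[x]}(V_{C(f_j)}, V_{C(f_i)})$; and by the $k[x]$-isomorphisms $T_i$, each summand becomes $\operatorname{Hom}_{k[x]}(\lmod{k[x]}{f_j k[x]}, \lmod{k[x]}{f_i k[x]})$, whose structure is given explicitly by Corollary 2.4. Reading off the standard-basis matrix of an endomorphism produces the claimed block decomposition, and under the explicit $\Phi$ of Lemma 2.2 the $(i,j)$-block $C_{ij}$ is exactly the matrix of the $(i,j)$-component. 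Thus the entire theorem reduces to describing, for each $(i,j)$, the matrix of a single module map $\phi_{ij}: \lmod{k[x]}{f_j k[x]} \to \lmod{k[x]}{f_i k[x]}$.

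For the main computation I would argue as follows. By Corollary 2.4 every such $\phi_{ij}$ has the form $1 + f_j k[x] \mapsto \lambda(x) q_{ij}(x) + f_i k[x]$ for some $\lambda \in k[x]$, where $q_{ij}(x)$ is precisely the generating polynomial defined above. Under $T_j^{-1}$ the standard basis vector $e_{k+1} \in k^{n_j}$ corresponds to $x^k + f_j k[x]$, so $\phi_{ij}(e_{k+1})$ corresponds to $x^k \lambda(x) q_{ij}(x) + f_i k[x]$. Transporting back through $T_i$ and using that $T_i$ intertwines multiplication by $x$ on $\lmod{k[x]}{f_i k[x]}$ with left multiplication by $C(f_i)$ on $k^{n_i}$, this vector equals $\lambda(C(f_i))\, C(f_i)^k \, q_{ij}$. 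Assembling these as the columns of $C_{ij}$ for $k = 0, 1, \ldots, n_j - 1$ then yields $C_{ij} = \lambda(C(f_i))\, Q_{ij}$. The equality with $\operatorname{span}_k\{I, C(f_i), \ldots, C(f_i)^{n_i - 1}\} Q_{ij}$ follows by reducing $\lambda$ modulo $f_i$ and using $f_i(C(f_i)) = 0$.

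To produce a genuine basis I would invoke Proposition 2.5 for the dimension count: the block $C_{ij}$ must contribute $\min(n_i, n_j)$ independent matrices. When $i \leq j$ the first column of $Q_{ij}$ is $e_1$, so the spanning set $\{C(f_i)^k Q_{ij}\}_{k=0}^{n_i-1}$ has first columns $e_1, \ldots, e_{n_i}$ and is immediately linearly independent. When $i > j$ the dimension count permits only $n_j$ independent matrices, and a relation $\sum_{k=0}^{n_j-1} a_k C(f_i)^k Q_{ij} = 0$ examined in its first column translates via $T_i$ to $\left(\sum a_k x^k\right) q_{ij}(x) \equiv 0 \pmod{f_i}$, i.e.\ $f_j \mid \sum a_k x^k$, which forces every $a_k = 0$ by degree. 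Taking each such basis element as a single nonzero block with zeros elsewhere and pulling back through $\Phi$ then produces the claimed basis of $C_{M_n(k)}(A)$. The only real obstacle is keeping straight the dictionary between $x$-multiplication on polynomial residues and $C(f_i)$-multiplication on column vectors when transporting through $T_i$; once that is set up correctly, the remainder is routine bookkeeping.
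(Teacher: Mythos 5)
Your proof is correct and follows essentially the same route as the paper: reduce via Lemma 2.1 and Lemma 2.2, transport through the isomorphisms $T_i$, apply Corollary 2.4 to write $\phi_{ij}$ as $1\mapsto \lambda(x)q_{ij}(x)$, and read off the columns of the block as $\lambda(C(f_i))C(f_i)^k q_{ij}$, giving $C_{ij}=\lambda(C(f_i))Q_{ij}$. You in fact go slightly further than the paper's own proof by explicitly verifying the linear-independence claims (first columns $e_1,\ldots,e_{n_i}$ when $i\le j$; the divisibility argument $f_j \mid \sum a_k x^k$ forcing $a_k=0$ by degree when $i>j$), which the paper asserts without argument.
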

\begin{proof}
To understand $C_{M_n(k)}(A)$, it suffice to understand $\operatorname{End}_{k[x]}(V_A)$ in an explicit manner and represent each element as a matrix. 

We proved so far that 
$$\operatorname{End}_{k[x]}V_A \simeq \bigoplus_1^m \operatorname{Hom}_{k[x]}\left(\lmod{k[x]}{f_ik[x]}, \lmod{k[x]}{f_jk[x]}\right)$$
Picking $(\phi_{ij})$ in the RHS, then this is mapped to $\phi\in \operatorname{End}_{k[x]}(V_A)$ via
$$\phi(a_1,\ldots, a_m) = \left(\sum_1^m \phi_{i1}(a_i),\ldots, \sum_1^m \phi_{im}(a_i)\right)$$

Consider the following commuting diagram 

\[ \begin{tikzcd}
\bigoplus_1^m k[x]/f_ik[x] \arrow{r}{\phi} & \bigoplus_1^m k[x]/f_ik[x] \arrow{d}{T^{-1} = ({T_1}^{-1},\ldots, {T_m}^{-1})} \\
\bigoplus_1^m V_{C(f_i)} \arrow{u}{T=(T_1,\ldots, T_m)} \arrow{r}{\Phi}& \bigoplus_1^m V_{C(f_i)}\\
\end{tikzcd}
\]

Pick $(v_1, \ldots, v_m)\in \bigoplus_1^m V_{C(f_i)}$, then we have 
\begin{align*}
    \Phi(v_1,\ldots,v_m) = T^{-1}\phi T (v_1,\ldots,v_m) &= T^{-1}\phi (T_1(v_1),\ldots, T_m(v_m)) \\ 
    &= T^{-1}\left(\sum_1^m \phi_{i1}(T_i(v_i)), \ldots, \sum_1^m \phi_{im}T_i(v_i)\right) \\
    &=  \left(\sum_1^m {T_1}^{-1}\phi_{i1}(T_i(v_i)), \ldots, \sum_1^m {T_m}^{-1}\phi_{im}T_i(v_i)\right)
\end{align*}

In particular, $\Phi(v_j) = \left( {T_1}^{-1}\phi_{i1}(T_j(v_j)), \ldots, {T_m}^{-1}\phi_{im}T_j(v_j)\right)$ (with a little bit ambiguity of internal and external direct sum, the $v_i$ means $(0,0,\ldots,v_i,\ldots,0)\in \bigoplus_1^m V_{C(f_i)}$). Hence represented as a matrix, we have the desired block decomposition with the block $C_{ij}$ depending on $\phi_{ij} \in \operatorname{Hom}_{k[x]} (k[x]/f_j, k[x]/f_i)$, and $C_{ij}$ is the matrix representation of $k[x]$-module homomorphism 
$$\Phi_{ij} : V_{C(f_j)} \mapsto V_{C(f_i)}\text{ via }  v_j\mapsto  T_i^{-1}(\phi_{ij}T_j(v_j))$$

To make it easier to understand $\Phi_{ij}$, I draw the commuting diagram here: 

\[ \begin{tikzcd}
 k[x]/f_jk[x] \arrow{r}{\phi} & k[x]/f_ik[x] \arrow{d}{{T_i}^{-1}} \\
 V_{C(f_j)} \arrow{u}{T_j} \arrow{r}{\Phi_{ij}}& V_{C(f_i)}\\
\end{tikzcd}
\]
By Corollary 2.4, we have 
\begin{align*}
\operatorname{Hom}_{k[x]}\left(\lmod{k[x]}{f_jk[x]}, \lmod{k[x]}{f_ik[x]}\right) &= \{1\mapsto \lambda\cdot q_{ij}(x) +f_jk[x] : \lambda\in k[x]\} \\
&= k[x](1\mapsto q_{ij}(x)+f_jk[x])
\end{align*}
and $q_{ij}(x) = 1$ when $i\leq j$ and $q_{ij}(x) = f_i/f_j$ when $i>j$. This matches our definition of the \textbf{generating polynomial}.

Note that all the $\phi_{ij}$ can be written as $1\mapsto \lambda(x)q_{ij}(x)$ for some $\lambda(x) \in k[x]$. Using the $k[x]$ module structure of $V_{C(f_i)}, V_{C(f_j)}$ and the property that $\Phi_{ij}$ is $k[x]$-module homomorphism, we have
$$\Phi_{ij}(v_j) = T_i^{-1}(\lambda(x)q_{ij}(x)T_j(v_j)) = \lambda(x)\cdot (T_i^{-1}(q_{ij}(x)T_j(v_j)) = \lambda(C(f_i))T_i^{-1}(q_{ij}(x)T_j(v_j))$$

We denote the matrix representation of $v_j \mapsto {T_i}^{-1}(q_{ij}(x)T_j(v_j))$ as $Q_{ij}$ and we have 
$$C_{ij} \in \{ \lambda(C(f_i))Q_{ij}: \lambda(x)\in k[x]\}$$

Now, it suffice to show that the matrix representation of the map $v_j \mapsto {T_i}^{-1}(q_{ij}(x)T_j(v_j))$ is the \textbf{generating matrix} that we defined previously.

The first column of $Q_{ij}$ is the image of $e_1$ which is sent to $T_i^{-1}(q_{ij}(x)\cdot 1) = T_i^{-1}(q_{ij}(x)) = q_{ij}$. Note that $T_j(e_k) = x^{k-1}\cdot T_j(e_1)$ and hence $$T_i^{-1}(q_{ij}(x)T_j(e_k)) = T_i^{-1}(q_{ij}(x)x^{k-1}T_j(e_1)) = x^{k-1}\cdot (T_i^{-1}(q_{ij}(x)T_j(e_1))) = C(f_i)^{k-1}q_{ij}$$

Hence this $Q_{ij}$ is exactly the \textbf{generating matrix} that we defined earlier.
\end{proof}

Next, I will give an algorithm for finding the rational canonical form of a matrix and also the transformation matrix.

\newpage
\section{Smith Normal Form and Invariant Factors}

We will first show the connection between the Smith Normal Form of $xI-A\in M_n(k[x])$ and invariant factors of $A\in M_n(k)$. Then I will give an algorithm with polynomial complexity for finding the Smith Normal Form.

\begin{lem}
Let $D$ be an PID, and $A\in M_{n,m}(D)$ be an $n\times m$ matrix. Denote $N_A(D) = \{v\in D^m: Av = 0\}$ and $R_A(D) = \{Av \in D^n: v\in R^m\}$. Let $F$ be the field of fraction of $D$, and define $N_A(F), R_A(F)$ similarly. Let $r = \dim_F R_A(F)$, then we have the following results: 
\begin{enumerate}
    \item $\lmod{D^m}{N_A(D)}$ is a free $D$-module and there is $x_1,\ldots, x_m\in D^m$ basis for $D^m$ such that $N_A(D) = Dx_{r+1}\oplus \cdots \oplus Dx_m$.
    \item There is $y_1,\ldots, y_n\in D^n$ $D$-basis for $D^n$ and $d_1|\cdots |d_n \in D$ such that $R_A(D) = Dd_1y_1\oplus\cdots\oplus Dd_ny_n$. 
\end{enumerate}
\end{lem}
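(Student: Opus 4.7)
The plan is to derive both parts as straightforward repackagings of the Smith Normal Form (SNF) of $A$ over the PID $D$. Assuming SNF, there exist $P \in GL_n(D)$, $Q \in GL_m(D)$, and an $n \times m$ ``diagonal'' matrix $\Sigma$ with diagonal entries $d_1, \ldots, d_s, 0, \ldots, 0$ satisfying $d_1 \mid \cdots \mid d_s$ nonzero, such that $A = P \Sigma Q$; its algorithmic construction is deferred to later in this section. A preliminary step is to identify $s$ with $r = \dim_F R_A(F)$: tensoring the factorization with $F$ keeps $P, Q$ invertible, and $\Sigma$ acting on $F^m$ has image of $F$-dimension $s$, forcing $s = r$.

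For part (1), I would take $x_i = Q^{-1} e_i$, which is a $D$-basis of $D^m$ since $Q^{-1} \in GL_m(D)$. The equation $Av = 0$ becomes $\Sigma(Qv) = 0$, and in the coordinates $u = Qv$ this is simply $d_i u_i = 0$ for $i \le r$. Since $D$ is a domain with $d_i \neq 0$, this forces $u_i = 0$ for $i \le r$ and leaves $u_{r+1}, \ldots, u_m$ free, giving $N_A(D) = Dx_{r+1} \oplus \cdots \oplus Dx_m$. The complementary summand $Dx_1 \oplus \cdots \oplus Dx_r$ would then map isomorphically onto $D^m / N_A(D)$, so the quotient is free of rank $r$.

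For part (2), I would take $y_i = P e_i$, giving a $D$-basis of $D^n$. For $v \in D^m$, letting $u = Qv$ range over all of $D^m$, the image is $Av = P\Sigma u = \sum_{i=1}^r d_i u_i\, y_i$, which immediately shows $R_A(D) = Dd_1 y_1 \oplus \cdots \oplus Dd_r y_r$. Padding with $d_{r+1} = \cdots = d_n = 0$ gives the stated form $R_A(D) = \bigoplus_{i=1}^n Dd_i y_i$, and the divisibility $d_1 \mid \cdots \mid d_n$ is preserved because anything divides zero.

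The main difficulty is not in the lemma itself but in the deferred existence and algorithmic construction of SNF over a PID; once that is in hand, both parts are essentially coordinate descriptions of $\ker A$ and $\operatorname{im} A$ in the bases provided by the columns of $Q^{-1}$ and $P$.
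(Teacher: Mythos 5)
Your argument is mathematically correct, but it runs in the opposite direction from the paper's, and in the context of this paper that reversal creates a circularity you must resolve. The paper proves the lemma directly from the structure theory of finitely generated modules over a PID: it shows $D^m/N_A(D)$ is torsion-free hence free, invokes the aligned-basis theorem for the submodules $N_A(D)\subset D^m$ and $R_A(D)\subset D^n$, and only then derives the factorization $A = P\Sigma Q$ as Proposition 4.2 --- that is, in this paper the Smith Normal Form is a \emph{consequence} of the lemma. You instead take $A = P\Sigma Q$ as given and read off kernel and image in the coordinates supplied by $Q^{-1}$ and $P$; that derivation (identifying $s=r$ by tensoring with $F$, computing $N_A(D) = Q^{-1}(De_{r+1}\oplus\cdots\oplus De_m)$ and $R_A(D) = Dd_1Pe_1\oplus\cdots\oplus Dd_rPe_r$) is clean and complete as far as it goes. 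What it buys is brevity and explicitness: once SNF is available, the lemma is a short coordinate computation and the bases $x_i, y_i$ come with an explicit construction from $P$ and $Q$. What it costs is that you must supply a proof of the existence of the diagonal factorization over a PID that does not pass through this lemma --- for instance the standard gcd/B\'ezout reduction of the $(1,1)$ entry with Noetherian induction, or elementary row and column operations when $D$ is Euclidean (which covers $D = k[x]$, the only case the paper actually uses). Deferring that construction ``to later in this section'' is not available here, because the paper's later construction (Proposition 4.2) is itself deduced from this lemma.
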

\begin{prop}(\textbf{Smith Normal Form}) \par
With the same setup as the above lemma, we have
 $$A = [y_1,\ldots, y_n]\operatorname{diag}(d_1,\ldots, d_r, 0,\ldots, 0)[x_1,\ldots, x_m]^{-1}$$ where $[y_1,\ldots, y_n]\in GL_n(D)$, $[x_1,\ldots, x_m]\in GL_m(D)$ and $\operatorname{diag}(d_1,\ldots, d_r, 0,\ldots, 0)$ is the \textbf{Smith Normal Form} of $A$.
\end{prop}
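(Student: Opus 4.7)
The plan is to translate the claimed factorization into a column-by-column identity and then refine the basis $x_1,\ldots,x_m$ so that the two outputs of Lemma 4.1 become compatible. Setting $P = [x_1 \mid \cdots \mid x_m] \in GL_m(D)$, $Q = [y_1 \mid \cdots \mid y_n] \in GL_n(D)$, and $\Sigma = \operatorname{diag}(d_1,\ldots,d_r,0,\ldots,0) \in M_{n,m}(D)$, the identity $A = Q \Sigma P^{-1}$ is equivalent to $AP = Q\Sigma$. Comparing $j$-th columns, this reduces to showing $A x_j = d_j y_j$ for $1 \leq j \leq r$ and $A x_j = 0$ for $r < j \leq m$. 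The second set of equalities is immediate from the kernel decomposition $N_A(D) = Dx_{r+1} \oplus \cdots \oplus Dx_m$ given by Lemma 4.1.

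For the first set, I would refine the choice of $x_1,\ldots,x_r$. Lemma 4.1 only produces these as some completion of the kernel basis to a basis of $D^m$; it is not a priori clear that they are $A$-preimages of $d_1 y_1,\ldots,d_r y_r$. To arrange this, consider the short exact sequence
\[
0 \longrightarrow N_A(D) \longrightarrow D^m \xrightarrow{\;A\;} R_A(D) \longrightarrow 0.
\]
Since $R_A(D)$ is a submodule of the free module $D^n$ over the PID $D$, it is itself free, so the sequence splits. Any section $s \colon R_A(D) \to D^m$ yields a decomposition $D^m = N_A(D) \oplus s(R_A(D))$; because $\{d_1 y_1,\ldots,d_r y_r\}$ is a $D$-basis of $R_A(D)$, the vectors $x_j' := s(d_j y_j)$ form a basis of the complementary summand $s(R_A(D))$.

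I would then replace the original $x_1,\ldots,x_r$ by $x_1',\ldots,x_r'$, keeping $x_{r+1},\ldots,x_m$ unchanged; the union is a basis of $D^m$ by the direct sum decomposition just established, and by construction $A x_j' = A\bigl(s(d_j y_j)\bigr) = d_j y_j$, which is exactly what the column comparison demands. Assembling columns yields $AP = Q\Sigma$, from which the factorization follows. The main obstacle is precisely this refinement step: Lemma 4.1 only guarantees existence of $x_1,\ldots,x_r$ completing the kernel basis, whereas the proposition tacitly requires them to be compatible with the image basis $d_1 y_1,\ldots,d_r y_r$; the splitting of the short exact sequence, enabled by freeness of $R_A(D)$ as a submodule of $D^n$ over a PID, is what allows this compatibility to be arranged without destroying the basis property.
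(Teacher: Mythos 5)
Your proof is correct and follows essentially the same route as the paper: both arguments reduce the factorization to the column identities $Ax_j=d_jy_j$ and fix the mismatch by replacing $x_1,\ldots,x_r$ with preimages of $d_1y_1,\ldots,d_ry_r$. The paper obtains these preimages via the induced isomorphism $\lmod{D^m}{N_A(D)}\simeq R_A(D)$, while you obtain them by splitting the short exact sequence using freeness of $R_A(D)$ — the same idea, and your phrasing makes the basis-replacement step slightly more airtight.
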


\begin{proof}
Pick $v+N_A(D)\in D^m/N_A(D)$ and suppose there is $d\in D-\{0\}$ such that $d(v+N_A(D)) = 0$. Then we have $dv\in N_A(D)$ and hence $D(dv) = 0$. Since $d\neq 0$, we have $Dv = 0$ and hence $v\in N_A(D)$, $Tor(D^m/N_A(D)) = 0$. Since $D$ is a PID, $D^m/N_A(D)$ is free.

Since $N_A(D)$ is a submodule of a free module $D^m$, we can find $x_1,\ldots, x_m\in D^m$ and $a_1|\cdots | a_k \in D$ such that $D^m = \bigoplus_1^m Dx_i$ and $N_A(D) = \bigoplus_1^k Da_ix_i$. 
Then 
$$\lmod{D^m}{N_A(D)} \simeq \left(\bigoplus_1^k \lmod{D}{a_iD}\right) \oplus D^{m-k}$$
Since $D^m/N_A(D)$ is torsion free, we must have $a_i\in D^\times$. By changing $x_i$ to $a_ix_i$, we may assumes that $a_i = 1$, and by swapping the order of $x_i$'s we can have $N_A(D) = Dx_{m-k+1}\oplus\cdots \oplus Dx_m$.

Then note that $\operatorname{rank} (R_A(D)) = \dim_F R_A(F) = r$. Since $R_A(D)$ is a submodule of $D^n$ which is free, we can find $y_1,\ldots, y_n$ basis for $D^n$ and $d_1|\ldots| d_r\in D$ such that $R_A(D) = \bigoplus_1^r Dd_iy_i$. 

Consider the surjective $D$-module homomorphism $D^m\mapsto R_A(D)$ via $v\mapsto Av$ which have kernel $N_A(D)$. We have $D/N_A(D)\simeq R_A(D)$ via $v+N_A(D)\mapsto Av$. Hence $Dx_1\oplus \cdots\oplus Dx_{m-k} \simeq Dd_1y_1\oplus\cdots\oplus Dd_ry_r$. Hence $m-k = r$ and we have $N_A(D) \simeq \bigoplus_{r+1}^m Dx_i$. Moreover, since $d_1y_1,\ldots, d_ny_n$ is basis for $R_A(D)$. Then using the map $D^m/N_A(D) \simeq R_A(D)$ via $v+N_A(D)\mapsto Av$, we can find $x_1'\ldots, x_r'$ which are basis for $Dx_1\oplus \cdots \oplus Dx_r$ and $Ax_i' = d_iy_i$.

By changing $x_1,\ldots, x_r$ to $x_1',\ldots, x_r'$, we get the desired results.
\end{proof}

\begin{thm}(\textbf{Algorithm for Invariant Factors and Transformation Matrix}) \par
Let $k$ be a field, and $A\in M_n(k)$. Suppose 
\[
xI-A = \gamma_1(\operatorname{diag}(f_1,\ldots, f_n))\gamma_2
\]
is the Smith Normal Form of $xI-A\in M_n(k[x])$. Assume that $\gamma_1 = [y_1,\ldots, y_n]$. Let $m$ be the largest integer such that $\deg f_{m-1} = 0$. Then $f_m,\ldots, f_n$ are invariant factors of $A$, and the transformation matrix $P$ such that $A = P(C(f_m)\oplus\cdots\oplus C(f_n))P^{-1}$ is given by 
\[
    P = \left[ 
    \begin{array}{c|c|c|c|c|c|c|c}
         \phi(y_m) & A\phi(y_m) & \cdots & A^{\lambda_m-1}\phi(y_m) & \phi(y_{m+1}) &\cdots &A^{\lambda_{m+1}-1}\phi(y_{m+1}) & \cdots
    \end{array}
    \right]
\]
where $\lambda_m$ is the degree of the invariant factor $f_m$, and $\phi: k[x]^n\mapsto V_A$ is defined by $\sum_0^\infty x^iv_i \mapsto \sum_0^\infty A^iv_i$
\end{thm}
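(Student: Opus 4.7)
The plan is to realize $V_A$ as a quotient of the free $k[x]$-module $k[x]^n$ presented by the matrix $xI-A$, and then translate the Smith Normal Form factorization $xI-A=\gamma_1 D\gamma_2$ into both the invariant factors of $A$ and an explicit rational canonical basis.

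First, I would check that $\phi$ is a surjective $k[x]$-module homomorphism and, crucially, that $\ker\phi=(xI-A)k[x]^n$. The inclusion $\supseteq$ is immediate from $\phi((xI-A)v)=A\phi(v)-\phi(Av)=0$. For $\subseteq$, given $p(x)=\sum_{i=0}^d x^i v_i$ with $\sum A^i v_i=0$, I would construct $q(x)=\sum_{j=0}^{d-1} x^j u_j$ satisfying $(xI-A)q=p$ by solving coefficient equations from the top: $u_{d-1}=v_d$, and inductively $u_{j-1}=v_j+Au_j$, so that $u_0=v_1+Av_2+\cdots+A^{d-1}v_d$; the remaining constant-term equation $v_0=-Au_0$ is precisely the hypothesis $\phi(p)=0$. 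This yields the short exact sequence $0\to k[x]^n\xrightarrow{xI-A}k[x]^n\xrightarrow{\phi}V_A\to 0$ and hence a $k[x]$-module isomorphism $V_A \cong k[x]^n/(xI-A)k[x]^n$.

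Next, I would feed the Smith Normal Form into this presentation. Since $\gamma_2\in GL_n(k[x])$ we have $(xI-A)k[x]^n=\gamma_1 D k[x]^n$, and left multiplication by $\gamma_1^{-1}$ is a $k[x]$-automorphism of $k[x]^n$ sending this submodule onto $Dk[x]^n$. Therefore $V_A\cong k[x]^n/Dk[x]^n\cong\bigoplus_{i=1}^n k[x]/f_ik[x]$. The units $f_1,\ldots,f_{m-1}$ contribute trivial summands, while the divisibility chain forced by Smith Normal Form together with $\deg f_i\geq 1$ for $i\geq m$ identifies $f_m,\ldots,f_n$ as the invariant factors of $A$. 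Tracing through the chain, the composition $k[x]^n\xrightarrow{\gamma_1}k[x]^n\xrightarrow{\phi}V_A$ has kernel exactly $Dk[x]^n$ and sends the standard basis vector $e_i$ to $\phi(\gamma_1 e_i)=\phi(y_i)$.

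Finally, for each $i\geq m$ the element $\phi(y_i)\in V_A$ generates a cyclic $k[x]$-submodule isomorphic to $k[x]/f_ik[x]$, and $V_A$ is the internal direct sum of these submodules. Within each summand, the relation $f_i(A)\phi(y_i)=0$ guarantees that $\phi(y_i),A\phi(y_i),\ldots,A^{\lambda_i-1}\phi(y_i)$ forms a $k$-basis in which $A$ acts by $C(f_i)$, exactly matching the identification $V_{C(f_i)}\cong k[x]/f_ik[x]$ from Section 1. Concatenating these blocks gives a $k$-basis of $V_A=k^n$ whose transition matrix is the stated $P$, and satisfies $A=P(C(f_m)\oplus\cdots\oplus C(f_n))P^{-1}$. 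I expect the main obstacle to be the identification $\ker\phi=(xI-A)k[x]^n$, since it is the only step demanding genuine computation; every downstream step is bookkeeping with Smith Normal Form together with the rational canonical form machinery already developed.
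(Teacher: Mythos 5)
Your proposal is correct and follows essentially the same route as the paper: present $V_A$ as $k[x]^n/(xI-A)k[x]^n$ via $\phi$, prove $\ker\phi=(xI-A)k[x]^n$ by the same coefficient-matching induction (you solve the recursion top-down, the paper bottom-up, but the resulting $u_j$ are identical), then push the Smith Normal Form through this presentation to read off the invariant factors and trace the standard basis vectors to obtain $P$. The only detail the paper spells out that you elide is the observation that $\det(xI-A)\neq 0$ forces every diagonal entry of the Smith Normal Form to be nonzero, which is needed before counting degrees; this is a one-line remark, not a gap.
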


\begin{proof}
Let $r$ be the number of non-zero entries on the diagonal as previous. We first show that $r=n$ and hence the Smith Normal Form of $xI-A$ has no zero entries on the diagonal.

Note that $\det (xI-A)\neq 0$ since if not, we can find $v_f\neq 0 \in k[x]^n$ such that $Av_f = xv_f$. However, the deg of the polynomial on both sides does not match unless $v_f = 0$. Hence $\det (xI-A)\neq 0$ and we have $\det \operatorname{diag}(f_1,\ldots, f_r,0, \ldots ,0)\neq 0$. Hence we must have $r=n$.

Consider the map $$\phi: k[x]^n\mapsto V_A$$
via
$$\sum_0^\infty x^iv_i \mapsto \sum_0^\infty A^iv_i\;\;\;\;\;\;\;v_i\in k^n \text{ and only finite \# of } v_i\neq 0 $$
It is easy to check that $\phi$ is a surjective $k[x]$-module homomorphism. (Surjectivity is from $\phi(v) = v$) We claim that $$\ker \phi = R_{xI-A}(k[x]) = k[x]f_1y_1\oplus\cdots\oplus k[x]f_ny_n$$
(The second equality is from the proof for Smith Normal Form, where $y_1,\ldots, y_n$ is a basis for $k[x]^n$)

Pick $v = (xI-A)\sum_0^\infty x^iv_i = \sum_0^\infty x^{i+1}v_i - \sum_0^\infty x^i(Av_i)\in R_{xI-A}(k[x])$, then $$\phi(v) = \sum_0^\infty A^{i+1}v_i - \sum_0^\infty A^i(Av_i) = 0$$ Hence $R_{xI-A}(k[x])\subset \ker\phi$. 

Conversely, suppose $\sum_0^\infty x^iv_i\in\ker\phi$. Then we have $\sum_0^\infty A^iv_i = 0$. WLOG, assume that $v_i = 0$ for $i> k$. Then we have 
\begin{equation}
    v_0 + Av_1+\cdots A^kv_k = 0
\end{equation}
We will construct $v_0',\ldots, v_k'$ inductively such that $v_i = v_{i-1}'-Av_i'$ with $v_{-1}'=0$: \\
Let $v_0' = v_1+\ldots + A^{k-1}v_k$, then note that $-Av_0' = v_0$ by (1).\\
Having defined $v_i'$, we consider $A^{i+1}(v_{i+1} - v_i') = A^{i+1}v_{i+1} - A^{i+1}v_i' = A^{i+1}v_{i+1} - A^{i}(v_{i-1}'-v_i)$. Inductively, we have $A^{i+1}(v_{i+1}-v_i') = A^{i+1}v_{i+1} + A_iv_i+\cdots v_0$. Using equation 1), we have  
$$A^{i+1}(v_{i+1}-v_i') = -(A^{i+2}v_{i+2}+\cdots + A^kv_k)$$
and hence 
$$v_{i+1} - v_i' = -A(v_{i+2}+\cdots + A^{k-i-1}v_k)$$
Then $v_{i+1}' = v_{i+2}+\cdots + A^{k-i-1}v_k$ satisfies the desired property.

Then $\sum_0^\infty x^iv_i = \sum_0^\infty x^i(v_{i-1}'-Av_i') = \sum_0^\infty x^{i+1}v_i'-\sum_0^\infty Ax^iv_i' = \sum_0^\infty (xI-A)x^iv_i'\in R_{xI-A}(k[x])$.

Hence we have $$R_{xI-A}(k[x]) = \ker\phi = k[x]f_1y_1\oplus\cdots\oplus k[x]f_ny_n$$
Then using the surjective $k[x]$-module homomorphism $\phi$, we have 
$$\lmod{k[x]^n}{R_{xI-A}(k[x])} = \bigoplus_1^n k[x]/f_i(x)k[x] = \bigoplus_m^n k[x]/f_i(x)k[x] \simeq V_A$$

By Uniqueness of Rational canonical Form, we know that $f_m,\ldots, f_n$ are invariant factor for $A$.

To find the transformation matrix $P\in GL_n(k)$ such that $A = P(C(f_m)\oplus\cdots\oplus C(f_n))P^{-1}$, it suffice to understand the isomorphism
$$V_A\simeq \bigoplus_m^n k[x]/f_i(x)k[x]\simeq \bigoplus_m^n V_{C(f_i)}$$ explicitly.

Let $\lambda_i = \deg f_i$, We consider following diagram, where $T$ is the standard map as in previous section: $T: V_{C(f)}\mapsto k[x]/fk[x]$ via $e_i\mapsto x^{i-1}$:
\[ \begin{tikzcd}
 \bigoplus_m^nk[x]/f_jk[x] \arrow{r}{i} & \bigoplus_1^n k[x]y_i/f_iy_ik[x] \arrow{d}{\phi} \\
 \bigoplus_m^nV_{C(f_j)} \arrow{u}{T} \arrow{r}{}& V_A\\
\end{tikzcd}
\]
To construct $P$, we need to know image of $e_i$. Consider $e_1$, $e_1$ will be sent to $T(e_1) = (1,0,\ldots,0)$, $i(T(e_1)) = y_m$, and $\phi(i(T(e_1))) = \phi(y_m)$;

Similarly, if $\lambda_{m}\geq 2$. then using $T(e_2) = xT(e_1)$, we have $\phi(i(T(e_2))) = \phi(xy_m) = x\phi(y_m) = A\phi(y_m)$.

Hence 
\[
    P = \left[ 
    \begin{array}{c|c|c|c|c|c|c|c}
         \phi(y_m) & A\phi(y_m) & \cdots & A^{\lambda_m-1}\phi(y_m) & \phi(y_{m+1}) &\cdots &A^{\lambda_{m+1}-1}\phi(y_{m+1}) & \cdots
    \end{array}
    \right]
\]
\end{proof}
In summary, given $A\in M_n(k)$, if we want to find $C_{M_n(k)}(A)$, we first find the Smith Normal Form of $xI-A\in M_n(k[x])$. Using what has been described above, we can find all invariant factors $f_1,\ldots, f_m$ of $A$ and the transformation matrix $P$ such that $A = PC(f_1)\oplus\cdots\oplus C(f_m)P^{-1}$. Then $C_{M_n(k)}(A) = PC_{M_n(k)}(C(f_1)\oplus\cdots \oplus C(f_m))P^{-1}$ where $C_{M_n(k)}(C(f_1)\oplus\cdots \oplus C(f_m))$ is described in the previous section.

Finding the Smith Normal Form of a matrix and the transformation matrix has polynomial complexity (See \cite{2} for a detailed discussion of the complexity). The $k$-dimension of the centralizer is at most $n^2$, and constructing the each basis element involves at most one step of polynomial division and $n$ steps of matrix multiplying a vector. Hence the whole algorithm for producing the $k$-basis for $C_{M_n(k)}(A)$ has polynomial complexity.

A sample implementation in C++ which supports $k$ to be $\mathbb{Z}/p\mathbb{Z}$ can be found at \newline
https://github.com/TianhaoW/CentralizerOfMatrix. 

\newpage 
\section{Examples}
Suppose 

\[
A = C(x^2+1)\oplus C(x^3+x^2+x+1) = 
\begin{bmatrix}
0 & 1 & 0 & 0 & 0 \\ 1 & 0 & 0 & 0 & 0 \\ 0 & 0 & 0 & 0 & 1\\ 0 & 0 & 1 & 0 & 1 \\ 0 & 0 & 0 & 1 & 1
\end{bmatrix}
\in M_5(\mathbb{F}_2)
\]

We have $q_{21}(x) = x+1$, $q_{12} = (1,0)$ and $q_{21} = (1, 1, 0)$. Then we calculate the generating matrix, we have 
$Q_{12} = \begin{bmatrix} 1 & 0 & 1 \\ 0 & 1 & 0 \end{bmatrix}$, $Q_{21} = \begin{bmatrix} 1 & 0  \\ 1 & 1 \\ 0 & 1 \end{bmatrix}$ and $Q_{11} = I_2$, $Q_{22} = I_3$. Then we have

\[
C_{M_n(\mathbb{F}_2)}(A) = \left\{\left[
\begin{array}{c|c}
     C_{11} & C_{12} \\\hline
     C_{21} & C_{22} 
\end{array} \right]\right\}
\]

with $C_{11} \in \operatorname{span}_{\mathbb{F}_2}\left\{I_2, \begin{bmatrix}
0 & 1 \\ 1 & 0
\end{bmatrix}\right\} $
, $C_{22} \in \operatorname{span}_{\mathbb{F}_2}\left\{I_3, \begin{bmatrix}
0 & 0 & 1 \\ 1 & 0 & 1 \\ 0 & 1 & 1
\end{bmatrix}, \begin{bmatrix}
0 & 0 & 1 \\ 1 & 0 & 1 \\ 0 & 1 & 1
\end{bmatrix}^2\right\}$

$C_{12} \in \operatorname{span}_{\mathbb{F}_2}\left\{Q_{12}, \begin{bmatrix}
0 & 1 \\ 1 & 0
\end{bmatrix}Q_{12}\right\} $
, $C_{21} \in \operatorname{span}_{\mathbb{F}_2}\left\{Q_{21}, \begin{bmatrix}
0 & 0 & 1 \\ 1 & 0 & 1 \\ 0 & 1 & 1
\end{bmatrix}Q_{21}
\right\}$

$\empty$

Here is another example by running the sample code with $k = \mathbb{Z}/5{\mathbb{Z}}$ and $A = \begin{bmatrix}0 & 1 & 3 \\ 3 & 2 & 4 \\ 0 & 0 & 4 \end{bmatrix}$. The output for the explicit basis for the space of centralizer is 
\[
\left\{
\begin{bmatrix}
1 & 3 & 0 \\ 0 & 0 & 0 \\ 0 & 0 & 0
\end{bmatrix},
\begin{bmatrix}
0 & 0 & 2 \\ 0 & 0 & 0 \\ 0 & 0 & 0
\end{bmatrix},
\begin{bmatrix}
4 & 2 & 0 \\ 2 & 1 & 0 \\ 3 & 4 & 0
\end{bmatrix},
\begin{bmatrix}
0 & 2 & 0 \\ 0 & 1 & 0 \\ 0 &0 & 1
\end{bmatrix},
\begin{bmatrix}
0 & 1 & 3 \\ 0 & 3 & 4 \\ 0 &0 & 4
\end{bmatrix}
\right\}
\]

\newpage
\section{The Wild Problem}
One version of the wild problem can be described as the following: Let $k$ be a field, and $A,B,A',B'\in M_n(k)$, determine if $(A,B)$, $(A',B')$ are simultaneously similar or not. In other words, determine if there is a matrix $P\in GL_n(k)$ such that $PAP^{-1} = A'$ and $PBP^{-1} = B'$.

If we relax the condition of $PAP^{-1} = A'$ and $PBP^{-1} = B'$ to $PA = A'P$, $PB = B'P$ (drop the invertibility of $P$), the complete description of $C_{M_n(k)}(A)$ will give an algorithm with polynomial complexity to determine this.

Starting with the pair $(A,B), (A', B')$ and suppose $A\sim A'$, $B\sim B'$, we first find the Rational Canonical Form of $A,A',B, B'$. Denote $R_A$ as the Rational Canonical Form of $A,A'$ ($A\sim A'$, they have the same Rational Canonical Form), and $R_B$ as the Rational Canonical Form of $B,B'$. 

Suppose $PAP^{-1} = A'$ and $QBQ^{-1} = B'$, (an example of $P,Q$ can be found from the transformation matrix to $R_A$ and $R_B$). If $P_1$ also satisfies $P_1A = A'P_1$, then we have $P_1A = PAP^{-1}P_1$ which implies $(P^{-1}P_1)A = A(P^{-1}P_1)$. Hence $P^{-1}P_1\in C_{M_n(k)}(A)$, and we have $P_1\in PC_{M_n(k)}(A)$. It can be verified easily that the converse also holds.

In other words, all the matrix $P_1$ such that $P_1A = A'P_1$ are given by $P_1\in PC_{M_n(k)}(A)$. To determine if there is $U$ such that $UA = A'U$ and $UB = B'U$, it suffice to check if $PC_{M_n(k)}(A)\cap QC_{M_n(k)}(B) = \emptyset$.

As we showed earlier, $C_{M_n(k)}(A)$ is a $k$-vector space and we know its basis explicitly. Also, \newline 
$PC_{M_n(k)}(A), QC_{M_n(k)}(B)$ are also $k$-vector spaces and their basis are given by left multiplication of $P,Q$ to the basis of $C_{M_n(k)}(A), C_{M_n(k)}(B)$.

Now, the question is reduced to determine if the intersection of two vector spaces with known basis are empty or not. This can be solved easily by doing row reductions.

The set of $U$ such that $UA = A'U$ and $UB = B'U$ is a $k$-vector space. If we want to solve the wild problem, we need to determine if this $k$-vector space of matrices has invertible elements or not, for which I do not have a solution.

\newpage

\end{document}